\theoremstyle{plain}
\newtheorem{thm}{Theorem}
\newtheorem{cor}[thm]{Corollary}
\newtheorem{lem}[thm]{Lemma}
  \let\olduparrow=\uparrow
  \xdef\uparrow{\mathpunct{\olduparrow}}
  \let\olddownarrow=\downarrow
  \xdef\downarrow{\mathpunct{\olddownarrow}}
\DeclareMathOperator{\Con}{Con}
\DeclareMathOperator{\At}{At}
\begin{document}

\title[Preprint]{MacNeille completion and profinite completion can coincide
  on finitely generated modal algebras}

\author[Submitted to Algebra Universalis]{Jacob Vosmaer} \email{contact@jacobvosmaer.nl}
\address{Plantage Muidergracht 24\\1018 TV Amsterdam\\The Netherlands}

\subjclass[2000]{Primary 06E25; Secondary 06B23, 03B45, 22A30}

\keywords{modal algebra, MacNeille completion, profinite completion}

\thanks{This research was supported by VICI grant 639.073.501 of the
  Netherlands Organization for Scientific Research (NWO)}

\date{April 28, 2008}
\maketitle

\begin{abstract}
  Following Bezhanishvili \& Vosmaer, we confirm a conjecture of Yde Venema by
  piecing together results from various authors. Specifically, we show
  that if $\mathbb{A}$ is a residually finite, finitely generated
  modal algebra such that $\operatorname{HSP}(\mathbb{A})$ has
  equationally definable principal congruences, then the profinite
  completion of $\mathbb{A}$ is isomorphic to its MacNeille
  completion, and $\Diamond$ is smooth. Specific examples of such
  modal algebras are the free $\mathbf{K4}$-algebra and the free
  $\mathbf{PDL}$-algebra.
\end{abstract}

\section{Introduction}
In this paper we compare two mathematical constructions applied to
modal algebras. The first is the MacNeille completion, which is an
order-theoretic generalization of the construction of the reals from
the rationals using Dedekind cuts \cite{MacNeille1937}. It has been
applied in logic to e.g.~prove the completeness of predicate calculi
\cite{RaSi1950}. The second is the profinite completion, which is a
universal algebraic construction, transforming an algebra into a
topological algebra endowed with a Stone (compact, Hausdorff,
zero-dimensional) topology. This construction stems from Galois theory
\cite{RiZa2000}, but has more recently also been connected with
lattice completions \cite{BGMM2005, Harding2005, Vosmaer2006, BV2007}.

This paper is a companion piece to \cite{BV2007}. In that paper,
parallel versions of our Theorems \ref{ProfMacN} and \ref{Thm:FinGen}
arise in a study of the connections between different completions of
Heyting algebras, using Esakia duality. In light of the topological
character of the profinite completion, in the present paper we will
present topological algebra proofs instead. This establishes a strong
connection with the body of work on canonicity \cite{GehJon2004} and
MacNeille canonicity \cite{TheuVen2005}. Another advantage of our
present perspective is that we can show how the two main Theorems
pivot around an interaction between principal lattice filters and
principal algebra congruences, which are in a 1-1 correspondence for
Heyting algebras, but not for modal algebras. Finally, we will briefly
mention some of the connections of our results to modal logic.

The author would like to thank Yde Venema, who suggested that Theorem
\ref{Thm:FinGen} might be true. Additionally, the author is grateful
to the Editor and the Referee for their criticisms and suggestions.

\section{Completions and topologies} \label{CompTop}

Let $\mathbb{B}= \langle B; \wedge,\vee,\neg ,0,1 \rangle$ be a
Boolean algebra. Given $b\in B$ we write $b\downarrow = \{a\in B \mid
a\leq b\}$ ($b\uparrow$ is defined dually).  We say $S\subseteq B$ is
\emph{join-dense} in $\mathbb{B}$ iff for every $a\in B$, $a= \bigvee
( a \downarrow \cap S)$ (\emph{meet-density} is defined dually). A
\emph{completion} of a lattice $\mathbb{B}$ is a pair $( m,
\mathbb{C})$, where $m\colon \mathbb{B} \hookrightarrow \mathbb{C}$ is
a lattice embedding into a complete lattice $\mathbb{C}$. Completions
$(m, \mathbb{C})$ and $(k, \mathbb{D})$ of $\mathbb{B}$ are isomorphic
if $gm=k$ for some lattice isomorphism $g\colon \mathbb{C} \rightarrow
\mathbb{D}$.  If $(m,\mathbb{C})$ is a completion of $\mathbb{B}$, let
$\rho_{\mathbb{B}}$ be the topology on $C$ generated by basis $\{
[m(a),m(b)] \mid a,b \in B\}$ (where $[x,y]=\{z \in C \mid x\leq z
\leq y\}$). By $\gamma_{\mathbb{B}}^{\downarrow},
\gamma_{\mathbb{B}}^{\uparrow}$ and $\gamma_{\mathbb{B}}$ we denote
the Scott topology, the dual Scott topology, and the biScott topology
on $\mathbb{B}$ respectively. Let $\At \mathbb{B}$ be the (possibly
empty) set of atoms of $\mathbb{B}$, and let $\At_{\omega} \mathbb{B}$
be the set of all finite joins of atoms of $\mathbb{B}$. Then
$\iota_{\mathbb{B}}$ is the topology generated by the basis $\{ [a,
\neg b] \mid a,b \in \At_{\omega} \mathbb{B}\}$. By \cite[Section
2]{GehJon2004}, $\iota_{\mathbb{B}} = \gamma_{\mathbb{B}}$ if
$\mathbb{B}$ is complete and atomic.

The \emph{MacNeille completion} \cite{BaBr1967} of a Boolean algebra
$\mathbb{B}$ is defined up to isomorphism as a completion $(m,
\mathbb{C})$ such that $m[B]$ is join-dense in $\mathbb{C}$ (by
\cite[Theorem V-27]{Birkhoff1967} $\mathbb{C}$ is then also a Boolean
algebra). We denote the MacNeille completion of $\mathbb{B}$ by
$\mathbb{\bar B}$.  Alternatively \cite[Theorem 4.5]{TheuVen2005},
$\mathbb{\bar B}$ can be characterized up to isomorphism as a
completion $(m,\mathbb{C})$ of $\mathbb{B}$ such that $\langle C,
\rho_{\mathbb{B}}\rangle$ is Hausdorff.  If $f\colon \mathbb{B}
\rightarrow \mathbb{C}$ is an order-preserving map between Boolean
algebras, then $f^{\circ} \colon \mathbb{\bar B} \rightarrow
\mathbb{\bar C}$, defined by $f^{\circ} \colon x \mapsto \bigvee \{
f(a) \mid m_{\mathbb{B}}(a) \leq x \}$, is the \emph{lower extension}
of $f$. The \emph{upper extension} $f^{\bullet}$ is defined dually.
Alternatively \cite[Section 5]{TheuVen2005}, $f^{\circ}$ is the
(pointwise) largest $(\rho_{\mathbb{B}}, \gamma_{\mathbb{\bar
    C}}^{\downarrow})$-continuous extension of $f$, and $f^{\bullet}$
is the smallest $(\rho_{\mathbb{B}}, \gamma_{\mathbb{\bar
    C}}^{\uparrow})$-continuous extension of $f$. We say $f$ is
\emph{smooth} if $f^{\circ}= f^{\bullet}$.

Given a modal algebra $\mathbb{A}= \langle A; \Diamond \rangle$, let
$\Phi_{\mathbb{A}} := \{ \theta\in \Con \mathbb{A} \mid
\mathbb{A}/\theta \text{ is finite}\}$. We say $\mathbb{A}$ is
\emph{residually finite} if for all $a,b\in \mathbb{A}$ with $a \neq
b$, there exists $\theta\in \Phi_{\mathbb{A}}$ such that $a/\theta
\neq b/\theta$.  The inverse system $\langle
\{\mathbb{A}/\theta\}_{\theta\in \Phi_{\mathbb{A}}},
f_{\theta\psi}\rangle$, where $f_{\theta\psi} \colon \mathbb{A}/\theta
\twoheadrightarrow \mathbb{A}/\psi$ (for all $\theta,\psi\in
\Phi_{\mathbb{A}}$ such that $\theta \subseteq \psi$) is defined by
$f_{\theta\psi} \colon a/\theta \mapsto a/\psi$, has a projective
limit
\[ \mathbb{\hat A}=\big\{ \alpha \in
\textstyle{\prod_{\Phi_{\mathbb{A}}}}\mathbb{A}/\theta \mid \forall
\theta,\psi \in \Phi_{\mathbb{A}} \text{ with } \theta\subseteq\psi,
\text{ if } \alpha(\theta)=a/\theta \text{ then } \alpha(\psi)=a/\psi
\big\}. \] The map $\mu \colon \mathbb{A} \rightarrow \mathbb{\hat
  A}$, defined by $\mu \colon a \mapsto (a/\theta)_{\theta\in
  \Phi_{\mathbb{A}}}$, is a modal algebra homomorphism which is
injective iff $\mathbb{A}$ is residually finite.  We call
$\mathbb{\hat A}$ the \emph{profinite completion of $\mathbb{A}$}
\cite{RiZa2000}.  Since $\mathbb{\hat A}$ is a complete lattice
\cite{Harding2005}, it follows that $(\mu, \mathbb{\hat A})$ is a
completion of $\mathbb{A}$ iff $\mathbb{A}$ is residually finite.  If
we define the discrete topology on each $\mathbb{A}/\theta$,
$\mathbb{\hat A}$ inherits a topology $\tau_{\mathbb{\hat A}}$ as a
closed subspace of the product $\prod_{\Phi_{\mathbb{A}}}
\mathbb{A}/\theta$. Now $\langle \hat A, \tau_{ \mathbb{ \hat A}}
\rangle$ is a Stone space \cite[Section 2]{BGMM2005}, and in
particular $\widehat{\Diamond} \colon \mathbb{\hat A} \rightarrow
\mathbb{\hat A}$ is $(\tau_{\mathbb{\hat A}}, \tau_{ \mathbb{\hat A}}
)$-continuous \cite{Bana1971}.

\begin{lem} \label{Tops} 
  If $\mathbb{A}$ is a Boolean algebra expansion, then
  $\tau_{\mathbb{\hat A}} = \iota_{\mathbb{\hat A}} =
  \gamma_{\mathbb{\hat A}}$.
\end{lem}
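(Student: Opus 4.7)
The plan is to split the three-way identity into $\iota_{\hat{\mathbb{A}}} = \gamma_{\hat{\mathbb{A}}}$ and $\tau_{\hat{\mathbb{A}}} = \iota_{\hat{\mathbb{A}}}$. A uniform observation will drive everything: because each $\mathbb{A}/\theta$ is finite, at each level only finitely many values can occur among any family $S \subseteq \hat{\mathbb{A}}$, so the pointwise sup (respectively meet) of $S$ in $\prod \mathbb{A}/\theta$ is coherent and computes the join (respectively meet) of $S$ in $\hat{\mathbb{A}}$. Consequently each projection $\pi_\theta$ is a complete Boolean homomorphism, and $\ker \pi_\theta$ is principal, generated by $i_\theta := \bigvee \ker \pi_\theta$.

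For $\iota_{\hat{\mathbb{A}}} = \gamma_{\hat{\mathbb{A}}}$ I would invoke the Gehrke--Jónsson result cited in the excerpt, which requires $\hat{\mathbb{A}}$ to be complete and atomic. Completeness is given. For atomicity, fix $\alpha \neq 0$, choose $\theta_0$ with $\alpha(\theta_0) \neq 0$ and an atom $a_0 \leq \alpha(\theta_0)$ of $\mathbb{A}/\theta_0$, and for each $\theta \in \Phi_\mathbb{A}$ form the $\tau$-clopen set $V_\theta$ of all $\beta \in \hat{\mathbb{A}}$ with $\beta \leq \alpha$, $\beta(\theta_0) = a_0$, and $\beta(\theta) \in \{0\} \cup \At(\mathbb{A}/\theta)$. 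Given a finite $F$, let $\chi = \theta_0 \cap \bigcap F$; lifting $a_0$ to an atom $c \leq \alpha(\chi)$ of $\mathbb{A}/\chi$ (using that surjective Boolean homomorphisms send atoms to atoms or $0$) and then using surjectivity of $\pi_\chi$ together with closure of $\hat{\mathbb{A}}$ under finite meets, I produce $\beta_0 \in \hat{\mathbb{A}}$ with $\beta_0 \leq \alpha$ and $\beta_0(\chi) = c$, witnessing $\bigcap_{\theta \in F} V_\theta \neq \emptyset$. By $\tau$-compactness, $\bigcap_\theta V_\theta$ contains some $\beta$, and this $\beta$ is an atom: if $0 < \gamma \leq \beta$ with $\gamma(\theta_1) \neq 0$, then $\beta(\theta_1)$ must itself be a nonzero atom with $\gamma(\theta_1) = \beta(\theta_1)$, and for each $\theta$ coherence along $\psi = \theta \cap \theta_1$ forces $\gamma(\theta) = \beta(\theta)$.

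For $\tau_{\hat{\mathbb{A}}} \subseteq \iota_{\hat{\mathbb{A}}}$, I would show every basic $\tau$-open $U_{\theta, c} = \pi_\theta^{-1}(c)$ is a basic $\iota$-open. Since $\pi_\theta$ restricts to a Boolean isomorphism $[0, \neg i_\theta] \cong \mathbb{A}/\theta$, the element $\neg i_\theta$ is a finite join of atoms of $\hat{\mathbb{A}}$ and lies in $\At_\omega \hat{\mathbb{A}}$; letting $c'$ be the unique element of $[0, \neg i_\theta]$ above which $\pi_\theta$ takes value $c$, a short calculation gives $U_{\theta, c} = [c', c' \vee i_\theta] = [c', \neg(\neg c' \wedge \neg i_\theta)]$, with $c'$ and $\neg c' \wedge \neg i_\theta$ both in $\At_\omega \hat{\mathbb{A}}$. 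For the reverse containment $\iota_{\hat{\mathbb{A}}} \subseteq \tau_{\hat{\mathbb{A}}}$, a basic $\iota$-open $[a, \neg b]$ decomposes as a finite intersection of sets $a_i\uparrow$ and $\hat{\mathbb{A}} \setminus b_j\uparrow$ for atoms $a_i, b_j$, so it suffices to show $a\uparrow$ is $\tau$-clopen for each atom $a$; picking $\theta$ with $a(\theta) \neq 0$ and using pointwise meets together with the atom property, $\{y : y \wedge a = 0\}$ equals $\pi_\theta^{-1}(\{d \in \mathbb{A}/\theta : d \wedge a(\theta) = 0\})$, which is $\tau$-clopen, and hence so is $a\uparrow$. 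The main obstacle I anticipate is the atomicity step: one must carefully track how the ``$0$ or atom at every level'' condition in the definition of $V_\theta$ interacts with coherence, so as to guarantee that the compactness-produced thread is genuinely a minimal nonzero element of $\hat{\mathbb{A}}$, and not merely minimal among threads of the chosen shape. Once this is secured, the rest reduces to routine basis chasing.
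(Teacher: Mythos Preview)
Your argument is correct, but it follows a genuinely different route from the paper. The paper's proof is two lines of citations: it invokes \cite[Corollary VII-2.3]{CCL1980} to conclude that any compact Hausdorff topological lattice satisfies $\tau_{\mathbb{\hat A}}=\gamma_{\mathbb{\hat A}}$, then quotes \cite{BGMM2005,Vosmaer2006} for the fact that $\mathbb{\hat A}$ is complete and atomic, and finally appeals (as you do) to \cite[Section 2]{GehJon2004} for $\iota_{\mathbb{\hat A}}=\gamma_{\mathbb{\hat A}}$. In contrast, you (i) prove atomicity of $\mathbb{\hat A}$ from scratch via a finite-intersection-property compactness argument on threads that are ``$0$ or atom at every level,'' and (ii) bypass the continuous-lattice result entirely by comparing $\tau_{\mathbb{\hat A}}$ and $\iota_{\mathbb{\hat A}}$ basis against basis, using that each $\pi_\theta$ is a complete Boolean homomorphism with principal kernel so that every subbasic $\tau$-cylinder is an $\iota$-interval and every atom-upset is a $\tau$-cylinder. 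What the paper's approach buys is brevity and a clean conceptual picture (the identity $\tau=\gamma$ holds for structural reasons having nothing to do with Boolean algebras); what your approach buys is self-containment and an explicit description of the atoms of $\mathbb{\hat A}$, which is informative in its own right. One small slip: your sets $V_\theta$ are $\tau$-closed but need not be $\tau$-clopen, since $\alpha\downarrow$ is in general only closed; this is harmless because closedness suffices for the compactness step.
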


\begin{proof}
  Since $\langle\mathbb{\hat A}, \tau_{\mathbb{\hat A}} \rangle$ is a
  compact Hausdorff topological lattice, it follows by \cite[Corollary
  VII-2.3]{CCL1980} that $\tau_{\mathbb{\hat A}}=\gamma_{\mathbb{\hat
      A}}$. Since $\mathbb{\hat A}$ is also a complete, atomic Boolean
  algebra \cite{BGMM2005, Vosmaer2006}, we know that
  $\iota_{\mathbb{\hat A}} = \gamma_{\mathbb{\hat A}}$ \cite[Section
  2]{GehJon2004}.
\end{proof}

\section{Comparing profinite completion and MacNeille completion}

\begin{thm}[cf.~\protect{\cite[Theorem
    4.12]{BV2007}}] \label{ProfMacN}
  Let $\mathbb{A}$ be a modal algebra. TFAE:
  \begin{enumerate}
  \item the profinite completion $(\mu, \mathbb{\hat A})$ is the
    MacNeille completion of $\mathbb{A}$, and $\Diamond$ is smooth,
  \item $\mathbb{A}$ is residually finite and for every $\theta\in
    \Phi_{\mathbb{A}}$, $1/\theta$ is a principal lattice filter.
  \end{enumerate}
\end{thm}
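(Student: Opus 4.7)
The plan is to prove (1) $\Leftrightarrow$ (2) by singling out, for each $\theta \in \Phi_{\mathbb{A}}$, the element $m_\theta := \bigwedge \pi_\theta^{-1}(1/\theta) \in \mathbb{\hat A}$, where $\pi_\theta \colon \mathbb{\hat A} \rightarrow \mathbb{A}/\theta$ is the natural projection. This meet exists because $\mathbb{\hat A}$ is complete, and $\pi_\theta(m_\theta) = 1/\theta$ because $\pi_\theta$ is completely join-preserving: the target is finite, and suprema in $\mathbb{\hat A}$ are computed componentwise. The MacNeille half of the theorem then reduces to two equivalences: (i) $1/\theta$ is a principal lattice filter in $\mathbb{A}$ iff $m_\theta \in \mu[A]$; (ii) $\mu[A]$ is join-dense in $\mathbb{\hat A}$ iff $m_\theta \in \mu[A]$ for every $\theta \in \Phi_{\mathbb{A}}$. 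I would combine these with the already-cited facts that $\mu$ is injective iff $\mathbb{A}$ is residually finite and that $(\mu, \mathbb{\hat A})$ is the MacNeille completion iff $\mu[A]$ is join-dense.

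For the forward direction of (i), assuming $1/\theta = b\uparrow$, I would verify $\mu(b) \leq x$ componentwise for every $x \in \pi_\theta^{-1}(1/\theta)$: for $\psi$ coarser than $\theta$ coherence forces $x(\psi) = 1$; for arbitrary $\psi$ one passes to $\theta \cap \psi \in \Phi_{\mathbb{A}}$ and uses that any $c$ with $c/\theta = 1/\theta$ satisfies $c \geq b$, so $b/\psi \leq c/\psi = x(\psi)$. The reverse direction is immediate because $\mu$ is order-reflecting. For (ii)($\Leftarrow$), every atom $\alpha$ of $\mathbb{\hat A}$ lies below some $m_\theta$ (take $\theta$ with $\alpha(\theta) \neq 0$); under the isomorphism $m_\theta\downarrow \cong \mathbb{A}/\theta$ induced by $\pi_\theta$, and with $m_\theta = \mu(b_\theta)$, such an atom equals $\mu(a \wedge b_\theta)$ for any $a$ with $a/\theta = \alpha(\theta)$. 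Thus $\mu[A]$ contains every atom of $\mathbb{\hat A}$, and atomicity of $\mathbb{\hat A}$ (from Lemma \ref{Tops}) yields join-density. For (ii)($\Rightarrow$), applying the completely join-preserving $\pi_\theta$ to $m_\theta = \bigvee \{\mu(a) : \mu(a) \leq m_\theta\}$ and using finiteness of $\mathbb{A}/\theta$ produces $c = a_1 \vee \dots \vee a_n \in 1/\theta$ with $\mu(c) = m_\theta$.

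Finally, to derive smoothness of $\Diamond$ under (2), I would argue that the componentwise extension $\widehat{\Diamond}$ coincides with both $\Diamond^\circ$ and $\Diamond^\bullet$. Since every $\pi_\theta$ is completely join- and meet-preserving into a finite algebra, any directed join or filtered meet in $\mathbb{\hat A}$ is componentwise eventually constant, hence $\tau_{\mathbb{\hat A}}$-convergent; the $\tau_{\mathbb{\hat A}}$-continuity of $\widehat{\Diamond}$ together with Hausdorffness of $\tau_{\mathbb{\hat A}}$ then forces $\widehat{\Diamond}$ to preserve all directed joins and filtered meets. Combining this with join-density (and hence, by Boolean complementation, meet-density) of $\mu[A]$ in $\mathbb{\hat A}$ gives $\widehat{\Diamond}(x) = \bigvee \{\mu(\Diamond a) : \mu(a) \leq x\} = \Diamond^\circ(x)$ and dually $\widehat{\Diamond}(x) = \Diamond^\bullet(x)$. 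The main obstacle I anticipate is the fine componentwise verification $\mu(b) \leq m_\theta$ in (i), which requires careful navigation of the directed system $\Phi_{\mathbb{A}}$ via intersections of congruences; once that lemma is in place, both the MacNeille characterization and the smoothness argument come out cleanly.
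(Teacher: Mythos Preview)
Your argument is correct and reaches the same conclusion, but it follows a more elementary, order-theoretic route than the paper's topological one. The paper leans on external machinery at each step: it cites \cite[Lemma 2.7]{BGMM2005} to know that $\pi_\theta^{-1}(1/\theta)$ is a principal filter of $\mathbb{\hat A}$, invokes the Hausdorffness criterion \cite[Theorem 4.5]{TheuVen2005} to certify that $(\mu,\mathbb{\hat A})$ is the MacNeille completion once $\iota\subseteq\rho$, and deduces smoothness from Lemma~\ref{Tops} together with the $(\rho,\gamma)$-continuity characterizations of $\Diamond^\circ$ and $\Diamond^\bullet$ from \cite{TheuVen2005}. You instead build $m_\theta$ by hand, reduce the MacNeille question to the two clean equivalences (i) and (ii), and verify them by explicit componentwise computations in the inverse limit; for smoothness you bypass the topological extension theory entirely by arguing directly that $\widehat{\Diamond}$ is Scott- and dual-Scott-continuous and hence equals both $\Diamond^\circ$ and $\Diamond^\bullet$. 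Both proofs share the same pivotal observation---that $\At\mathbb{\hat A}\subseteq\mu[A]$ under (2)---so the difference is one of packaging rather than strategy. Your approach is more self-contained and makes the role of the directed system $\Phi_{\mathbb{A}}$ very explicit; the paper's approach is shorter on the page and highlights the link to the canonicity literature. One small slip: when you justify $\pi_\theta(m_\theta)=1/\theta$ you invoke complete \emph{join}-preservation of $\pi_\theta$, but what you need there is complete \emph{meet}-preservation (which also holds, for the same reason).
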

\begin{proof} 
  If $(\mu, \mathbb{\hat A})$ is the MacNeille completion of
  $\mathbb{A}$, then $\mu\colon \mathbb{A} \rightarrow \mathbb{\hat
    A}$ must be injective, so that $\mathbb{A}$ is residually finite
  (see above). Let $\theta\in \Phi_{\mathbb{A}}$, then it follows from
  the definition of $\mathbb{\hat A}$ that the projection
  $\pi_{\theta} \colon \mathbb{\hat A} \twoheadrightarrow
  \mathbb{A}/\theta$ commutes with $\mu$ and the natural map $a\mapsto
  a/\theta$; i.e.~$a/\theta=\pi_{\theta}\mu(a)$. By \cite[Lemma
  2.7]{BGMM2005}, $\pi_{\theta}^{-1} (1/\theta)$ is a closed principal
  filter of $\mathbb{\hat A}$; say $\pi_{\theta}^{-1}(1/ \theta)=
  \alpha \uparrow$. Because of the correspondence between modal
  filters and modal congruences \cite[Theorem 29]{Venema2007},
  $\alpha \uparrow$ completely characterizes $\mathbb{A}/\theta$ in the
  following sense: $\mathbb{A}/\theta \cong [0,\alpha]_{\mathbb{ \hat
      A}}$ as a bounded lattice \cite[Exercise 4.12]{DP2002}. This
  implies that $\alpha \downarrow$ is finite. Since $(\mu,
  \mathbb{\hat A})$ is the MacNeille completion of $\mathbb{A}$,
  $\mu[\mathbb{A}]$ is join-dense in $\mathbb{\hat A}$, so by
  finiteness of $\alpha \downarrow$, there must exist $a\in
  \mathbb{A}$ such that $\mu(a)=\alpha$. Now $b/\theta = 1/\theta$ iff
  $\mu(b) \in \pi_{\theta}^{-1}(1/\theta) = \alpha \uparrow = \mu(a)
  \uparrow$ iff $b \geq a$, so $1/\theta = a\uparrow$ is a principal
  lattice filter.

  Conversely, if $\mathbb{A}$ is residually finite then $\mu \colon
  \mathbb{A} \rightarrow \mathbb{\hat A}$ is injective, so $(\mu,
  \mathbb{\hat A})$ is a completion of $\mathbb{A}$. To show that
  $(\mu, \mathbb{\hat A})$ is the MacNeille completion of
  $\mathbb{A}$, we will consider the different topologies on
  $\mathbb{\hat A}$. We first show that $\At \mathbb{\hat A} \subseteq
  \mu[\mathbb{A}]$. If $\alpha \in \At \mathbb{\hat A}$ there must be
  some $\theta\in \Phi_{\mathbb{A}}$ such that $\alpha(\theta) \in \At
  \mathbb{A}/\theta$. Because $1/\theta$ is a principal lattice filter
  $c\uparrow$, we know that $\mathbb{A}/\theta \cong
  [0,c]_{\mathbb{A}}$ as a bounded lattice, so there must be some $a
  \leq c$ with $a\in \At \mathbb{A}$ and $a/\theta =
  \alpha(\theta)$. But then $\mu(a)=\alpha$. It follows that $\At
  \mathbb{\hat A} \subseteq \mu[\mathbb{A}]$, whence $\iota \subseteq
  \rho$. Since $\iota$ is Hausdorff, so is $\rho$. Using \cite[Theorem
  4.5]{TheuVen2005} we can thus conclude that, as far as the Boolean
  substructure of $\mathbb{A}$ is concerned, $(\mu, \mathbb{\hat A})$
  is the MacNeille completion of $\mathbb{A}$.  Now to show that
  $\Diamond$ is smooth, remember that $\widehat{\Diamond}\colon
  \mathbb{\hat A} \rightarrow \mathbb{\hat A}$ is $(\tau,
  \tau)$-continuous. Since $\tau = \iota = \gamma$ by Lemma \ref{Tops}
  and $\iota \subseteq \rho$, it follows that $\widehat{\Diamond}$ is
  $(\rho, \gamma)$-continuous. But then since $\gamma^{\downarrow},
  \gamma^{\uparrow} \subseteq \gamma$, it follows by \cite[Proposition
  5.9]{TheuVen2005} that $\Diamond^{\bullet} \leq \widehat{\Diamond}
  \leq \Diamond^{\circ}$. Since also $\Diamond^{\circ} \leq
  \Diamond^{\bullet}$ \cite[Proposition 5.6]{TheuVen2005}, it follows
  that $\Diamond$ is smooth.
\end{proof}
Note that the Theorem above also admits a third equivalent condition,
characterizing the dual space of $\mathbb{A}$. This perspective is
further explored in \cite{BV2007}.

\section{Finitely generated modal algebras with EDPC}

Having equationally definable principal congruences (EDPC) is a strong
meta-logical property of varieties of algebras, that coincides with
e.g.~the existence of a deduction theorem or of a master modality
\cite{BP1994, Kracht1999} for the modal logic corresponding to a
variety of modal algebras. Examples of such logics are logics of
bounded depth, $n$-transitive logics such as $\mathbf{K4}$, or regular
test-free $\mathbf{PDL}$ with finitely many basic programs.

\begin{lem}[Proposition 3.4.3 of
  \protect{\cite{Kracht1999}}] \label{EDPC-principal} Let
  $\mathcal{V}$ be a variety of modal algebras. $\mathcal{V}$ has EDPC
  iff every principal modal filter of an algebra in $\mathcal{V}$ is a
  principal lattice filter.
\end{lem}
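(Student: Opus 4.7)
The plan is to use the standard bijective correspondence between modal congruences on a modal algebra $\mathbb{B}$ and its modal filters (as recalled in the preceding proof of Theorem~\ref{ProfMacN}): a congruence $\theta$ corresponds to the filter $1/\theta$, and the principal congruence $\Theta(a,b)$ corresponds to the principal modal filter generated by the Boolean biconditional $a\leftrightarrow b$. Under this correspondence the Lemma becomes: $\mathcal{V}$ has EDPC iff there exists a unary term $t$ (a \emph{master modality}) such that in every $\mathbb{B}\in\mathcal{V}$ and every $a\in B$, the principal modal filter generated by $a$ equals the principal lattice filter $t(a)\uparrow$.

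For the implication from ``every principal modal filter is a principal lattice filter'' to EDPC, I would work inside the free $\mathcal{V}$-algebra $\mathbb{F}=\mathbb{F}_{\mathcal{V}}(x)$ on one generator. The hypothesis yields an element $t(x)\in F$ such that $t(x)\uparrow$ is the principal modal filter generated by $x$; by construction, $t(x)$ is represented by a unary term. Since for any $b$ in any $\mathbb{B}\in\mathcal{V}$ there is a homomorphism $\mathbb{F}\to\mathbb{B}$ sending $x\mapsto b$, and since such homomorphisms preserve generation of modal filters, it follows uniformly that the principal modal filter generated by $b$ equals $t(b)\uparrow$. EDPC is then witnessed by the single defining equation
\[ t(u\leftrightarrow v)\wedge(x\leftrightarrow y) = t(u\leftrightarrow v), \]
since $(x,y)\in\Theta(u,v)$ iff $x\leftrightarrow y$ lies in the principal modal filter generated by $u\leftrightarrow v$, iff $t(u\leftrightarrow v)\leq x\leftrightarrow y$.

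For the converse, I would assume EDPC via defining equations $\{p_i(x,y,u,v)=q_i(x,y,u,v)\}_{i=1}^n$ and extract from them a master modality. Specializing $y=v=1$, membership of $x$ in the principal modal filter generated by $u$ becomes the single equation $R(x,u)=1$ where $R(x,u):=\bigwedge_i(p_i(x,1,u,1)\leftrightarrow q_i(x,1,u,1))$. The task is then to produce, from $R$, a term $t(u)$ such that $\{x\mid R(x,u)=1\}$ equals $t(u)\uparrow$ in every algebra of $\mathcal{V}$. I expect this syntactic step to be the main obstacle: working in $\mathbb{F}_{\mathcal{V}}(u)$, one must argue that the filter defined by $R(\cdot,u)=1$ has a least element, and that this least element is represented by a uniform term. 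This can be carried out either by direct manipulation of the defining equations (using that the filter is closed under meets and under the modal filter-closure operation) or by invoking a known equivalent variety-theoretic characterization of EDPC, such as Blok and Pigozzi's ternary deductive term; both routes are available in the literature cited, and either yields the required master modality.
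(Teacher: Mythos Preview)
The paper does not prove this lemma at all: it is quoted verbatim as Proposition~3.4.3 of \cite{Kracht1999} and used as a black box in the proof of Theorem~\ref{Thm:FinGen}. There is therefore no ``paper's own proof'' to compare against.

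On its own merits, your outline is sound. The direction from ``every principal modal filter is a principal lattice filter'' to EDPC is complete: in $\mathbb{F}_{\mathcal{V}}(x)$ the least element $t(x)$ of the modal filter generated by $x$ is a term, and the facts $t(x)\leq x$, $t(x)\leq\Box t(x)$, together with an inequality $t(x)\geq\bigwedge_{i}\Box^{n_i}x$ witnessing that $t(x)$ lies in the modal filter generated by $x$, are all equations valid in $\mathbb{F}_{\mathcal{V}}(x)$ and hence throughout $\mathcal{V}$; this is exactly what you need for $t(b)\uparrow$ to coincide with the principal modal filter on $b$ in an arbitrary $\mathbb{B}\in\mathcal{V}$. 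For the converse you correctly identify the nontrivial step---extracting a master modality from the EDPC equations---and defer to the Blok--Pigozzi or Kracht characterisations, which is appropriate: the usual route is precisely the equivalence of EDPC with weak transitivity (existence of $n$ with $\Box^{\leq n+1}x=\Box^{\leq n}x$), after which $t(x):=\Box^{\leq n}x$ does the job. So your proposal is a correct expansion of what the paper leaves as a citation, with the converse direction honestly flagged as relying on the cited sources rather than being carried out from scratch.
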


Note that the hypotheses below strongly resemble those of
\cite[Theorem 4]{Wolter1997}.

\begin{thm}[cf.~\protect{\cite[Corollary
    4.5.3]{BV2007}}] \label{Thm:FinGen} If $\mathbb{A}$ is a
  residually finite, finitely generated modal algebra such that
  $\operatorname{HSP}(\mathbb{A})$ has equationally definable
  principal congruences, then the profinite completion $(\mu,
  \mathbb{\hat A})$ is the MacNeille completion of $\mathbb{A}$ and
  $\Diamond$ is smooth.
\end{thm}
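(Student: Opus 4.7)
My plan is to deduce the statement from Theorem \ref{ProfMacN} by verifying its condition (2). Residual finiteness of $\mathbb{A}$ is already among the hypotheses, so the only outstanding task is to show that for every $\theta \in \Phi_{\mathbb{A}}$, the lattice filter $1/\theta$ is principal.

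Fix $\theta \in \Phi_{\mathbb{A}}$ and write $F := 1/\theta$. Under the bijection between modal congruences and modal filters of $\mathbb{A}$ already invoked in the proof of Theorem \ref{ProfMacN}, $F$ corresponds to $\theta$. The key intermediate claim I would establish is that $F$ is a \emph{principal modal filter}. The route I would take is first to argue that $\theta$ itself is a finitely generated congruence: since $\mathbb{A}$ is generated by a finite set $G$, the quotient $\mathbb{A}/\theta$ is generated by the image of $G$, and being finite it is finitely presented in the variety of modal algebras (the full operation table on the finitely many distinct term-values serves as an explicit finite presentation). Lifting this presentation back along the natural surjection $\mathbb{A} \twoheadrightarrow \mathbb{A}/\theta$ exhibits $\theta$ as generated by finitely many pairs. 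Translating through the modal-filter/modal-congruence correspondence gives finite generation of $F$, and since modal filters are closed under finite meets, any finitely generated modal filter is already principal.

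The EDPC hypothesis then enters through Lemma \ref{EDPC-principal}: every principal modal filter of an algebra in $\operatorname{HSP}(\mathbb{A})$ is a principal lattice filter. Hence $1/\theta = F$ is a principal lattice filter of $\mathbb{A}$, which is precisely condition (2) of Theorem \ref{ProfMacN}. Invoking that theorem yields simultaneously the isomorphism of the profinite and MacNeille completions and the smoothness of $\Diamond$.

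The main obstacle I anticipate is the step asserting that $\theta$ is finitely generated whenever $\mathbb{A}/\theta$ is finite: this is a standard universal-algebraic fact for finitely generated algebras in a finite signature, but one must be careful when transporting "finite generation" across the correspondence between modal congruences and modal filters, so as to legitimately feed Lemma \ref{EDPC-principal}. After that, the argument is essentially a pipeline through the two earlier results.
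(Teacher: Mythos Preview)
Your proposal is correct and follows essentially the same route as the paper: verify condition (2) of Theorem \ref{ProfMacN} by showing each $\theta\in\Phi_{\mathbb{A}}$ is a principal modal congruence, then invoke Lemma \ref{EDPC-principal}. The only cosmetic difference is that the paper outsources your ``$\theta$ is finitely generated'' step to \cite[Theorem 1]{RiSa1978} (phrased as compactness in $\Con\mathbb{A}$) and the passage ``finitely generated $\Rightarrow$ principal'' to a remark in \cite{Wolter1997}, whereas you sketch both directly; your caution about transporting finite generation across the congruence/filter correspondence is well placed but unproblematic, since the filter generated by $a_1\leftrightarrow b_1,\dots,a_n\leftrightarrow b_n$ equals the one generated by their meet.
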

\begin{proof}
  Since $\mathbb{A}$ is a finitely generated algebra, every $\theta\in
  \Phi_{\mathbb{A}}$ is compact \cite[Theorem 1]{RiSa1978}. As is
  remarked in \cite{Wolter1997}, every compact congruence of a modal
  algebra is principal, so under our hypotheses, every $\theta\in
  \Phi_{\mathbb{A}}$ is principal. Now since
  $\operatorname{HSP}(\mathbb{A})$ has EDPC, Lemma
  \ref{EDPC-principal} tells us that $1/\theta$ is a principal lattice
  filter for all $\theta\in \Phi_{\mathbb{A}}$, so by Theorem
  \ref{ProfMacN}, $(\mu, \mathbb{\hat A})$ is the MacNeille completion
  of $\mathbb{A}$ and $\Diamond$ is smooth.
\end{proof}
Note that the EDPC clause in the Theorem above is suppressed in the
Heyting algebra case \cite{BV2007}, because every variety of Heyting
algebras has EDPC.

The conditions of Theorem \ref{Thm:FinGen} above are sufficient; what
about necessity? From Theorem \ref{ProfMacN} we know that it is
necessary that $\mathbb{A}$ is residually finite. Moreover, in light
of \cite[Section 3.3]{Vosmaer2006}, we know that it is necessary for
Theorem \ref{Thm:FinGen} that $\mathbb{A}$ is atomic. This helps us to
find counterexamples to the Theorem if we remove the requirements of
being finitely generated or having EDPC. For instance, let
$\mathbb{A}$ be the free algebra on $1$ generator for the modal logic
$\mathbf{T}$ (the logic of reflexive Kripke frames). Then
\cite[Corollary 7: Example 1]{Wolter1997} tells us that $\mathbb{A}$
is residually finite and finitely generated but not atomic. This shows
us that being residually finite and finitely generated is not
sufficient for the conclusion of Theorem \ref{Thm:FinGen}.
Alternatively, the free transitive modal algebra on $\omega$
generators is an example of a residually finite modal algebra
$\mathbb{A}$, generating a variety with EDPC, such that $\mathbb{A}$
is not atomic. In summary, residual finiteness is necessary for
Theorem \ref{Thm:FinGen}, and if we remove either of the other two
conditions, we can find non-atomic counterexamples.

\begin{cor}
  Let $\mathbb{A}$ be a finitely generated free algebra for
  $\mathbf{K4}$ or $\mathbf{PDL}$. Then the MacNeille completion and
  the profinite completion of $\mathbb{A}$ are the same and $\Diamond$
  is smooth.
\end{cor}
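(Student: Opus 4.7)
The plan is to reduce the corollary to Theorem \ref{Thm:FinGen} by verifying its three hypotheses for $\mathbb{A}$: finite generation, residual finiteness, and that $\operatorname{HSP}(\mathbb{A})$ has EDPC. Finite generation is part of the assumption.

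For the EDPC hypothesis, I would observe that a non-trivial free algebra for a variety $\mathcal{V}$ always generates $\mathcal{V}$ as a variety, so $\operatorname{HSP}(\mathbb{A})$ coincides with the whole variety of $\mathbf{K4}$-algebras or, respectively, $\mathbf{PDL}$-algebras. Both varieties are listed at the opening of Section 4 as examples having EDPC via \cite{Kracht1999}, the relevant features being $n$-transitivity in the case of $\mathbf{K4}$ and the existence of a master modality in the case of test-free $\mathbf{PDL}$ with finitely many basic programs.

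The main obstacle is residual finiteness, for which I would invoke the finite model property: classical for $\mathbf{K4}$ (due to Segerberg) and for test-free $\mathbf{PDL}$ with finitely many basic programs (Fischer--Ladner). A standard argument then shows that any free algebra $F_L$ for a modal logic $L$ with FMP is residually finite: given $[\varphi] \neq [\psi]$ in $F_L$, the formula $\varphi \leftrightarrow \psi$ is not an $L$-theorem, so by FMP there exist a finite $L$-algebra $\mathbb{B}$ and a valuation $v$ with $v(\varphi) \neq v(\psi)$; the unique homomorphic extension $F_L \to \mathbb{B}$ separates $[\varphi]$ and $[\psi]$, and its kernel is a congruence in $\Phi_{F_L}$ witnessing the required separation. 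With all three hypotheses in hand, Theorem \ref{Thm:FinGen} delivers the corollary immediately; the only non-routine ingredient is the appeal to the FMP, everything else being bookkeeping.
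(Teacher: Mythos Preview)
Your proposal is correct and follows essentially the same route as the paper: verify the hypotheses of Theorem~\ref{Thm:FinGen} using the finite model property for residual finiteness and the master modality for EDPC. The only cosmetic difference is that the paper packages the FMP-to-residual-finiteness step into a citation of Mal'cev's theorem \cite[Theorem IV-14.4]{Malcev1973} (finitely generated free algebras of a variety generated by its finite members are residually finite), whereas you spell out the direct separating-homomorphism argument; these are the same idea.
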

\begin{proof}
  Let $\mathbf{L}$ be either $\mathbf{K4}$ or $\mathbf{PDL}$ and let
  $\mathcal{V}$ be the variety corresponding to $\mathbf{L}$. Since
  $\mathbf{L}$ has the finite model property, $\mathcal{V}$ is
  generated by its finite members. By \cite[Theorem
  IV-14.4]{Malcev1973}, this implies that $\mathbb{A}$, being a
  finitely generated free algebra for $\mathcal{V}$, is residually
  finite. Using the fact that $\mathbf{L}$ has a master modality, it
  follows that we can apply Theorem \ref{Thm:FinGen}.
\end{proof}


\end{document}